\newtheorem{thm}{Theorem}[section]
\newtheorem{prop}[thm]{Proposition}
\newtheorem{lem}[thm]{Lemma}
\begin{document}
\title{On the Representation of Integers by Binary Forms Defined by Means of the Relation $(x + yi)^n = R_n(x, y) + J_n(x, y)i$}
\author{Anton Mosunov}
\affil{University of Waterloo}

\date{}

\maketitle

\begin{abstract}
Let $F$ be a binary form with integer coefficients, degree $d \geq 3$ and non-zero discriminant. Let $R_F(Z)$ denote the number of integers of absolute value at most $Z$ which are represented by $F$. In 2019 Stewart and Xiao proved that $R_F(Z) \sim C_FZ^{2/d}$ for some positive number $C_F$. We compute $C_{R_n}$ and $C_{J_n}$ for the binary forms $R_n(x, y)$ and $J_n(x, y)$ defined by means of the relation
$$
(x + yi)^n = R_n(x, y) + J_n(x, y)i,
$$
where the variables $x$ and $y$ are real.
\end{abstract}

\section{Introduction}
Let $F$ be a binary form with integer coefficients, degree $d \geq 3$ and non-zero discriminant. Let $R_F(Z)$ denote the number of integers of absolute value at most $Z$ which are represented by $F$. In 2019 Stewart and Xiao \cite{stewart-xiao} proved that $R_F(Z) \sim C_FZ^{2/d}$ for some positive number $C_F$. Furthermore, they proved that $C_F = W_FA_F$, where $A_F$ is the area of the \emph{fundamental region}
$$
\{(x, y) \in \mathbb R^2\ \colon\ |F(x, y)| \leq 1\}
$$
of $F$ and $W_F$ is an explicit function of the \emph{rational automorphism group} $\operatorname{Aut} F$ \cite[Theorem 1.2]{stewart-xiao}. The rational automorphism group can be defined as follows: for any matrix $A = \begin{pmatrix}a & b\\c & d\end{pmatrix}$, with rational entries, let
$$
F_A(x, y) = F(ax + by, cx + dy).
$$
Then $\operatorname{Aut} F$ is the set of all matrices $A$ that fix $F$:
$$
\operatorname{Aut} F = \left\{A \in M_{2 \times 2}(\mathbb Q)\ \colon\ F_A = F\right\}.
$$
It is known that $\operatorname{Aut} F$ is a finite subgroup of $\operatorname{GL}_2(\mathbb Q)$, and that every finite subgroup of $\operatorname{GL}_2(\mathbb Q)$ is $\operatorname{GL}_2(\mathbb Q)$-conjugate to one of the $10$ groups listed in \mbox{Table \ref{tab:finite-subgroups}}. In the case when $\operatorname{Aut} F$ is a subgroup of $\operatorname{GL}_2(\mathbb Z)$, the formula for $W_F$ becomes especially simple:
\begin{equation} \label{eq:WF}
W_F = \frac{1}{|\operatorname{Aut} F|}.
\end{equation}

\begin{table}[t]
\centering
\begin{tabular}{| l | l | l | l |}
\hline
Group & Generators & Group & Generators\\
\hline\rule{0pt}{4ex}
$\bm C_1$ &
$\begin{pmatrix}1 & 0\\0 & 1\end{pmatrix}$ &
$\bm D_1$ &
$\begin{pmatrix}0 & 1\\1 & 0\end{pmatrix}$\\\rule{0pt}{5ex}
$\bm C_2$ &
$\begin{pmatrix}-1 & 0\\0 & -1\end{pmatrix}$ &
$\bm D_2$ &
$\begin{pmatrix}0 & 1\\1 & 0\end{pmatrix}, \begin{pmatrix}-1 & 0\\0 & -1\end{pmatrix}$\\\rule{0pt}{5ex}
$\bm C_3$ &
$\begin{pmatrix}0 & 1\\-1 & -1\end{pmatrix}$ &
$\bm D_3$ &
$\begin{pmatrix}0 & 1\\1 & 0\end{pmatrix}, \begin{pmatrix}0 & 1\\-1 & -1\end{pmatrix}$\\\rule{0pt}{5ex}
$\bm C_4$ &
$\begin{pmatrix}0 & 1\\-1 & 0\end{pmatrix}$ &
$\bm D_4$ &
$\begin{pmatrix}0 & 1\\1 & 0\end{pmatrix}, \begin{pmatrix}0 & 1\\-1 & 0\end{pmatrix}$\\\rule{0pt}{5ex}
$\bm C_6$ &
$\begin{pmatrix}0 & -1\\1 & 1\end{pmatrix}$ &
$\bm D_6$ &
$\begin{pmatrix}0 & 1\\1 & 0\end{pmatrix}, \begin{pmatrix}0 & 1\\-1 & 1\end{pmatrix}$\\
\hline
\end{tabular}
\caption{Representatives of equivalence classes of finite subgroups of $\operatorname{GL}_2(\mathbb Q)$ under $\operatorname{GL}_2(\mathbb Q)$-conjugation.}
\label{tab:finite-subgroups}
\end{table}

For a number of families of binary forms with integer coefficients the value of $C_F$ is either known or well-estimated. Stewart and Xiao \cite{stewart-xiao} computed $C_F$ when $F$ is a binomial form of degree at least three, while Xiao \cite[Theorems 3.1 and 4.1]{xiao} computed $C_F$ in the case when $F$ is a cubic form or a quartic form of non-zero discriminant. Fouvry and Waldschmidt \cite{fouvry-waldschmidt} estimated $C_{\Phi_n}$ for cyclotomic binary forms $\Phi_n(x, y)$. In \cite{mosunov2}, the author estimated $C_F$ for Chebyshev binary forms of the first kind $T_n(x, y)$ and of the second kind $U_n(x, y)$, as well as for the minimal binary forms $\Psi_n(x, y)$ of $2\cos \frac{2\pi}{n}$ and $\Pi_n(x, y)$ of $2\sin \frac{2\pi}{n}$.

In this article, we compute $C_F$ for two families of binary forms. For a positive integer $n$, define the binary forms $R_n(x, y)$ and $J_n(x, y)$ by means of the relation
\begin{equation} \label{eq:RnIn-relation}
(x + yi)^n = R_n(x, y) + J_n(x, y)i,
\end{equation}
where $x$ and $y$ are real variables. Examples of such forms are given in \mbox{Table \ref{tab:RnIn}}. Let $B(x, y)$ denote the beta function and for a positive integer $m$ let $\nu_2(m)$ denote its $2$-adic order. We prove the following.

\begin{table}[b]
\centering
\begin{tabular}{c | c | c}
$n$ & $R_n(x, y)$ & $J_n(x, y)$\\
\hline
$1$ & $x$ & $y$\\
$2$ & $x^2 - y^2$ & $2xy$\\
$3$ & $x^3 - 3xy^2$ & $3x^2y - y^3$\\
$4$ & $x^4 - 6x^2y^2 + y^4$ & $4x^3y - 4xy^3$\\
$5$ & $x^5 - 10x^3y^2 + 5xy^4$ & $5x^4y - 10x^2y^3 + y^5$\\
$6$ & $x^6 - 15x^4y^2 + 15x^2y^4 - y^6$ & $6x^5y - 20x^3y^3 + 6xy^5$\\
$7$ & $x^7 - 21x^5y^2 + 35x^3y^4 - 7xy^6$ & $7x^6y - 35x^4y^3 + 21x^2y^5 - y^7$\\
$8$ & $x^8 - 28x^6y^2 + 70x^4y^4 - 28x^2y^6 + y^8$ & $8x^7y - 56x^5y^3 + 56x^3y^5 - 8xy^7$
\end{tabular}
\caption{Binary forms $R_n(x, y)$ and $J_n(x, y)$ for $n = 1, 2, \ldots, 8$.}
\label{tab:RnIn}
\end{table}

\begin{thm} \label{thm:RnIn}
Let $n \geq 3$ be an integer. Then
$$
A_{R_n} = B\left(\frac{1}{2} - \frac{1}{n}, \frac{1}{2}\right), \quad W_{R_n} = \begin{cases}
1/2 & \text{if $n$ is odd,}\\
1/4 & \text{if $n$ is even and $4 \nmid n$,}\\
1/8 & \text{if $4 \mid n$,}
\end{cases}
$$
$$
C_{R_n} = W_{R_n}A_{R_n} = 2^{-\min(\nu_2(2n), 3)}B\left(\frac{1}{2} - \frac{1}{n}, \frac{1}{2}\right),
$$
and
$$
A_{J_n} = B\left(\frac{1}{2} - \frac{1}{n}, \frac{1}{2}\right), \quad W_{J_n} =
\begin{cases}
1/2 & \text{if $n$ is odd,}\\
1/4 & \text{if $n$ is even,}
\end{cases}
$$
$$
C_{J_n} = W_{J_n}A_{J_n} = 2^{-\min(\nu_2(2n), 2)}B\left(\frac{1}{2} - \frac{1}{n}, \frac{1}{2}\right).
$$
\end{thm}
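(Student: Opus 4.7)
The plan is to compute the area $A_F$ and the weight $W_F$ separately for $F \in \{R_n, I_n\}$, and then assemble $C_F = W_F A_F$.

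For the area, pass to polar coordinates $(x, y) = (r\cos\theta, r\sin\theta)$ in \eqref{eq:RnIn-relation} to obtain $R_n = r^n \cos(n\theta)$ and $I_n = r^n \sin(n\theta)$. The fundamental region $|R_n| \leq 1$ becomes $r \leq |\cos(n\theta)|^{-1/n}$, so
$$A_{R_n} = \int_0^{2\pi} \tfrac{1}{2}\,|\cos(n\theta)|^{-2/n}\,d\theta.$$
Substituting $u = n\theta$ and using that $|\cos u|$ has period $\pi$ and is symmetric about $\pi/2$ reduces this to $2\int_0^{\pi/2} \cos^{-2/n} u\,du$, which is the classical Beta integral $B(1/2 - 1/n,\,1/2)$. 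The same computation with $\sin$ in place of $\cos$ yields the same value for $A_{I_n}$.

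For $W_F$, I first identify the full real symmetry group of $F$. Any finite subgroup of $\operatorname{GL}_2(\mathbb R)$ preserves a positive-definite inner product, and for the dihedral action at hand this inner product is forced (up to a scalar) to be the standard one, so every element of $\operatorname{Aut}_{\mathbb R} F$ is a rotation or a reflection about a line through the origin. A rotation $\theta \mapsto \theta + \phi$ preserves $r^n \cos(n\theta)$ iff $n\phi \in 2\pi\mathbb Z$, while a reflection $\theta \mapsto 2\alpha - \theta$ preserves it iff $2n\alpha \in 2\pi\mathbb Z$; hence $\operatorname{Aut}_{\mathbb R} R_n$ is dihedral of order $2n$, with rotations $\phi = 2\pi k/n$ and reflection axes $\alpha = \pi k/n$. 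The analogous analysis for $I_n = r^n \sin(n\theta)$ gives the same rotations but reflection axes $\alpha = (2k+1)\pi/(2n)$. To restrict to rational matrices, note that a rotation by $\phi$ (respectively a reflection with axis $\alpha$) has matrix entries $\pm\cos\phi, \pm\sin\phi$ (respectively $\pm\cos 2\alpha, \pm\sin 2\alpha$), and by Niven's theorem both are simultaneously rational only when the relevant angle lies in $\{0, \pi/2, \pi, 3\pi/2\}$.

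A case analysis on $n \bmod 4$ now finishes the argument. For $R_n$, both rotations and reflections contribute $1, 2, 4$ rational elements according as $n$ is odd, $n \equiv 2\pmod 4$, or $4 \mid n$, giving $|\operatorname{Aut} R_n| \in \{2, 4, 8\}$. For $I_n$, the rotations contribute the same $1, 2, 4$, but the reflection condition $(2k+1)\pi/n \in \{0, \pi/2, \pi, 3\pi/2\}$ admits $1, 2, 0$ solutions respectively, so $|\operatorname{Aut} I_n| \in \{2, 4, 4\}$. All matrices that appear have entries in $\{-1, 0, 1\}$, so $\operatorname{Aut} F \subset \operatorname{GL}_2(\mathbb Z)$ and \eqref{eq:WF} applies, yielding the stated $W_{R_n}$ and $W_{I_n}$; multiplying by $A_F$ and encoding the three-case split as a power of $2$ via $\nu_2(2n)$ produces the formulas for $C_{R_n}$ and $C_{I_n}$. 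The most delicate point is the $I_n$ case when $4 \mid n$: although the rotation subgroup grows from order $2$ to order $4$, every rational reflection disappears because the axes $(2k+1)\pi/(2n)$ no longer hit Niven's admissible set, and this is precisely what stops $W_{I_n}$ from shrinking past $1/4$ and creates its asymmetry with $W_{R_n}$ in that case.
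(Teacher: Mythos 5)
Your proof is correct, but it reaches the theorem by a genuinely different route than the paper on both halves. For the areas, you integrate $\tfrac12|F(\cos\theta,\sin\theta)|^{-2/n}$ directly in polar coordinates and recognize the Beta integral; the paper instead factors $I_n = 2^{n-1}F_n^*$ via Proposition \ref{prop:RnIn-roots}, quotes the Bean--Laugesen evaluation of $A_{F_n^*}$, rescales by $|c|^{-2/d}$, and only then transfers the answer to $R_n$ through the rotation $M_n$ of Section \ref{sec:Rn-area}. Your computation is self-contained and treats $R_n$ and $I_n$ symmetrically, at the cost of reproving a special case of the cited result; note that convergence of $\int_0^{\pi/2}\cos^{-2/n}u\,du$ is exactly where the hypothesis $n\ge 3$ enters. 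For the weights, you determine the full real automorphism group --- dihedral of order $2n$ inside $O(2)$ --- and then cut down to $\operatorname{GL}_2(\mathbb Q)$ with Niven's theorem, whereas the paper works entirely inside $\operatorname{GL}_2(\mathbb Q)$, using the conjugacy classification of its finite subgroups (Table \ref{tab:finite-subgroups}) together with a case-elimination lemma from \cite{mosunov2}. Your route is arguably more transparent and explains structurally why $W_{I_n}$ stalls at $1/4$ when $4\mid n$: the rational rotations double but every rational reflection axis disappears. The one step you should spell out is the assertion that the invariant inner product is ``forced to be the standard one.'' This is true, but the reason deserves a line: $\operatorname{Aut}_{\mathbb R}F$ already contains the standard rotation of order $n\ge 3$, and the only symmetric positive-definite matrices commuting with a non-central rotation are the scalar multiples of the identity; combined with the (standard, but worth citing) finiteness of the automorphism group of a form of degree at least $3$ with non-zero discriminant, this places $\operatorname{Aut}_{\mathbb R}F$ inside $O(2)$ as you claim. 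All the resulting counts, the observation that the surviving matrices lie in $\operatorname{GL}_2(\mathbb Z)$ so that (\ref{eq:WF}) applies, and the final bookkeeping with $\nu_2(2n)$ agree with the paper.
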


The article is structured as follows. In Section \ref{sec:properties} we summarize some important properties of $R_n(x, y)$ and $J_n(x, y)$. In Sections \ref{sec:In-area} and \ref{sec:In-aut} we compute $A_{J_n}$ and $W_{J_n}$, respectively. In Sections \ref{sec:Rn-area} and \ref{sec:Rn-aut} we compute $A_{R_n}$ and $W_{R_n}$, respectively.

\section{Properties of $R_n(x, y)$ and $J_n(x, y)$} \label{sec:properties}

Let $n$ be a positive integer, and recall that the binary forms $R_n(x, y)$ and $J_n(x, y)$ are defined by means of the relation (\ref{eq:RnIn-relation}). In this section we will investigate some properties of $R_n(x, y)$ and $J_n(x, y)$. We begin with the observation that the coefficients of $R_n(x, y)$ and $J_n(x, y)$ are known.

\begin{prop} \label{prop:RnIn-coefficients}
Let $n$ be a positive integer. Then
$$
\begin{array}{r l}
R_n(x, y) & = \sum\limits_{\substack{0 \leq k \leq n\\ \text{$k$ even}}}(-1)^{\frac{k}{2}}\binom{n}{k}x^{n - k}y^k,\\
J_n(x, y) & = \sum\limits_{\substack{1 \leq k \leq n\\ \text{$k$ odd}}}(-1)^{\frac{k - 1}{2}}\binom{n}{k}x^{n - k}y^k.
\end{array}
$$
\end{prop}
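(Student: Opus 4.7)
The plan is to obtain both identities simultaneously by expanding $(x+yi)^n$ with the binomial theorem and isolating the real and imaginary parts. Concretely, I would begin by writing
$$
(x+yi)^n = \sum_{k=0}^{n}\binom{n}{k}x^{n-k}(yi)^k = \sum_{k=0}^{n}\binom{n}{k}x^{n-k}y^k\, i^k.
$$
Since $x$ and $y$ are real, the terms in this sum are real precisely when $i^k$ is real, i.e.\ when $k$ is even, and purely imaginary when $k$ is odd. The forms $R_n(x,y)$ and $I_n(x,y)$ are themselves real-valued for real $x,y$ (they are sums of monomials in $x,y$ with real coefficients), so the defining relation (\ref{eq:RnIn-relation}) lets me identify $R_n(x,y)$ with the real part and $I_n(x,y)$ with the imaginary part of the sum above.

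Next I would evaluate $i^k$ by parity. For $k$ even, write $k=2j$ and note that $i^{2j}=(-1)^j=(-1)^{k/2}$. For $k$ odd, write $k=2j+1$ and note that $i^{2j+1}=(-1)^j i=(-1)^{(k-1)/2}i$. Substituting back and splitting the single binomial sum according to the parity of $k$ gives
$$
(x+yi)^n = \sum_{\substack{0\le k\le n\\ k\text{ even}}}(-1)^{k/2}\binom{n}{k}x^{n-k}y^k \;+\; i\sum_{\substack{1\le k\le n\\ k\text{ odd}}}(-1)^{(k-1)/2}\binom{n}{k}x^{n-k}y^k.
$$

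Finally, matching real and imaginary parts with (\ref{eq:RnIn-relation}) yields the two claimed formulas for $R_n(x,y)$ and $I_n(x,y)$. There is no substantive obstacle here; this is essentially a bookkeeping argument using the binomial theorem together with the four-periodicity of the powers of $i$. The only point that warrants a brief mention is the legitimacy of comparing real and imaginary parts, which follows from the fact that $x,y\in\mathbb{R}$ makes every monomial $x^{n-k}y^k$ real, so no cancellation between the two sums can occur.
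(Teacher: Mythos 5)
Your proof is correct and is precisely the argument the paper intends: the paper's proof is the one-line instruction ``Apply the Binomial Theorem to $(x + yi)^n$,'' and your write-up simply carries out that expansion, sorts terms by the parity of $k$ via $i^{2j} = (-1)^j$ and $i^{2j+1} = (-1)^j i$, and matches real and imaginary parts. No differences in approach and no gaps.
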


\begin{proof}
Apply the Binomial Theorem to $(x + yi)^n$ in (\ref{eq:RnIn-relation}).
\end{proof}

Next, we observe that $R_n(x, y)$ and $J_n(x, y)$ split into linear factors over $\mathbb R$.

\begin{prop} \label{prop:RnIn-roots}
Let $n$ be a positive integer. Then
$$
\begin{array}{r l}
R_n(x, y) & = 2^{n - 1}\prod\limits_{k = 0}^{n - 1}\left(x\sin\left(\frac{(2k + 1)\pi}{2n}\right) - y\cos\left(\frac{(2k + 1)\pi}{2n}\right)\right),\\
J_n(x, y) & = 2^{n - 1}\prod\limits_{k = 1}^n\left(x\sin\frac{k\pi}{n} - y\cos\frac{k\pi}{n}\right).
\end{array}
$$
\end{prop}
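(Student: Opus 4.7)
The plan is to pass to polar coordinates, factor the resulting trigonometric expressions by a standard sine product identity, and then translate the result back into linear forms in $x$ and $y$.

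First I would write $x = r\cos\theta$, $y = r\sin\theta$ with $r = \sqrt{x^2 + y^2}$, so that $x + yi = re^{i\theta}$ and hence $(x + yi)^n = r^n e^{in\theta}$. Comparing real and imaginary parts in (\ref{eq:RnIn-relation}) yields the identities
$$
R_n(x, y) = r^n \cos(n\theta), \qquad I_n(x, y) = r^n \sin(n\theta),
$$
valid for all $(x, y) \in \mathbb{R}^2$ (with both sides polynomial in $x, y$, so the identities actually hold as polynomial identities once extended continuously from $r > 0$).

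Next I would invoke the classical product formula
$$
\sin(n\phi) = 2^{n - 1}\prod_{k = 0}^{n - 1}\sin\!\left(\phi + \frac{k\pi}{n}\right),
$$
which follows, for instance, from factoring $z^n - 1$ over $\mathbb{C}$ and specialising $z = e^{2i\phi}$. Applied with $\phi = \theta$ this gives $I_n = 2^{n-1}\prod_{k = 0}^{n-1}\bigl[r\sin(\theta + k\pi/n)\bigr]$, and applied with $\phi = \theta + \pi/(2n)$ together with $\cos(n\theta) = \sin(n\theta + \pi/2)$ it gives $R_n = 2^{n-1}\prod_{k = 0}^{n-1}\bigl[r\sin(\theta + (2k+1)\pi/(2n))\bigr]$. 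The key conversion is the identity
$$
r\sin(\theta + \alpha) = (\sin\alpha)\, x + (\cos\alpha)\, y,
$$
obtained from the addition formula together with $x = r\cos\theta$, $y = r\sin\theta$. Substituting this in produces factorisations of the correct shape, but with $(\sin\alpha)x + (\cos\alpha)y$ in place of $(\sin\alpha)x - (\cos\alpha)y$.

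The only remaining step is to reindex. Using $\sin(\pi - \alpha) = \sin\alpha$ and $\cos(\pi - \alpha) = -\cos\alpha$, the substitution $k \mapsto n - k$ in the $I_n$ product converts each factor $(\sin(k\pi/n))x + (\cos(k\pi/n))y$ into $(\sin(k\pi/n))x - (\cos(k\pi/n))y$, and changes the index range from $0 \leq k \leq n - 1$ to $1 \leq k \leq n$, matching the stated formula. An analogous reindexing $k \mapsto n - 1 - k$ handles the $R_n$ product, since $(2(n-1-k)+1)\pi/(2n) = \pi - (2k+1)\pi/(2n)$. I expect the main obstacle to be purely bookkeeping: keeping track of the signs and index shifts so that the final products are presented in exactly the form given in the statement, and verifying (by comparing the $x^n$ coefficient, e.g.\ via the classical identity $\prod_{k=0}^{n-1}\sin((2k+1)\pi/(2n)) = 2^{1-n}$) that no extraneous sign has been introduced.
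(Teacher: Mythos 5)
Your proposal is correct, and it reaches the factorisations by a route that differs from the paper's in one substantive way. Both arguments begin identically: passing to polar coordinates and invoking De Moivre to get $R_n(x,y) = r^n\cos(n\theta)$ and $I_n(x,y) = r^n\sin(n\theta)$ (the paper states this only on the unit circle, but homogeneity gives the general form). The divergence is in how the factorisation and, in particular, the leading constant $2^{n-1}$ are obtained. The paper only reads off the \emph{zeros} of $\cos(n\theta)$ and $\sin(n\theta)$, which pins down each form as an unknown constant times a product of linear factors; it then determines that constant by comparing a single coefficient (the $x^n$ coefficient of $R_n$, the $x^{n-1}y$ coefficient of $I_n$, both known from Proposition \ref{prop:RnIn-coefficients}) against an explicit evaluation of $\prod_{k=0}^{n-1}\sin\bigl(\tfrac{(2k+1)\pi}{2n}\bigr) = 2^{1-n}$ and $\prod_{k=1}^{n-1}\sin\bigl(\tfrac{k\pi}{n}\bigr) = 2^{1-n}n$ (the latter cited from an earlier paper). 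You instead invoke the closed-form product identity $\sin(n\phi) = 2^{n-1}\prod_{k=0}^{n-1}\sin\bigl(\phi + \tfrac{k\pi}{n}\bigr)$, which delivers the factorisation and the constant $2^{n-1}$ simultaneously, leaving only the conversion $r\sin(\theta+\alpha) = (\sin\alpha)x + (\cos\alpha)y$ and the reindexings $k \mapsto n-k$ and $k \mapsto n-1-k$, both of which you carry out correctly. Your route is self-contained and arguably cleaner in that the normalising constant requires no separate computation; the paper's route uses only the location of the roots and so needs less machinery up front, at the cost of the two auxiliary sine-product evaluations. Your closing coefficient check is redundant but harmless.
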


\begin{proof}
Setting $x = \cos \theta$ and $y = \sin \theta$ in (\ref{eq:RnIn-relation}), it follows from de Moivre's formula that
$$
\cos(n\theta) + \sin(n\theta) i = (\cos \theta + \sin \theta i)^n = R_n(\cos \theta, \sin \theta) + J_n(\cos \theta, \sin \theta) i.
$$
Thus,
$$
R_n(\cos \theta, \sin \theta) = \cos(n \theta) \quad \text{and} \quad J_n(\cos \theta, \sin \theta) = \sin(n \theta)
$$
for all $\theta \in \mathbb R$.

Notice that $\cos(n\theta) = 0$ for $\theta = \frac{(2k + 1)\pi}{2n}$, where $k = 0, 1, \ldots, n - 1$. Since the zeroes of $\cos(n\theta)$ are known, it is now possible to write down $R_n(x, y)$ as a product of $n$ linear factors over $\mathbb R$ times a non-zero constant $r_n$. Now, observe that
\small
\begin{align*}
\prod\limits_{k = 0}^{n - 1}\sin\left(\frac{(2k + 1)\pi}{2n}\right)
& = \prod\limits_{k = 0}^{n - 1}\frac{e^{\frac{(2k + 1)\pi i}{2n}} - e^{-\frac{(2k + 1)\pi i}{2n}}}{(2i)} = (-2i)^{-n}\prod\limits_{k = 0}^{n - 1}e^{-\frac{(2k + 1)\pi i}{2n}}\prod\limits_{k = 0}^{n - 1}\left(1 - e^{\frac{(2k + 1)\pi i}{n}}\right)\\
& = (-2i)^{-n}e^{-\frac{n\pi i}{2}}\prod\limits_{k = 0}^{n - 1}\left(1 - e^{\frac{(2k + 1)\pi i}{n}}\right) = 2^{-n}\prod\limits_{k = 0}^{n - 1}\left(1 - e^{\frac{(2k + 1)\pi i}{n}}\right)= 2^{1 - n},
\end{align*}
\normalsize
where the last equality follows from the fact that $\prod_{k = 0}^{n - 1}\left(x - e^{\frac{(2k + 1)\pi i}{n}}\right) = x^n + 1$. From Proposition \ref{prop:RnIn-coefficients} we know that the coefficient of $x^n$ in $R_n(x, y)$ is always equal to $1$, so
$$
1 = r_n\prod\limits_{k = 0}^{n - 1}\sin\left(\frac{(2k + 1)\pi}{2n}\right) = r_n2^{1 - n}.
$$
Thus, $r_n = 2^{n - 1}$.

Notice that $\sin(n\theta) = 0$ for $\theta = \frac{k\pi}{n}$, where $k = 1, 2, \ldots, n$. Since the zeroes of $\sin(n\theta)$ are known, it is now possible to write down $J_n(x, y)$ as a product of $n$ linear factors over $\mathbb R$ times a non-zero constant $s_n$. By \cite[Section 4]{mosunov1}, \mbox{$\prod_{k = 1}^{n - 1}\sin\frac{k\pi}{n} = 2^{1 - n}n$}. From Proposition \ref{prop:RnIn-coefficients} we know that the coefficient of $x^{n - 1}y$ in $J_n(x, y)$ is always equal to $n$, so
$$
n = s_n\prod\limits_{k = 1}^{n - 1}\sin\frac{k\pi}{n} = s_n2^{1 - n}n.
$$
Thus, $s_n = 2^{n - 1}$.
\end{proof}

\section{Area of the Fundamental Region of $J_n(x, y)$} \label{sec:In-area}

For a positive integer $n$, define
$$
F_n^*(x, y) = \prod\limits_{k = 1}^n\left(x\sin\frac{k\pi}{n} - y\cos\frac{k\pi}{n}\right).
$$
Then it follows from Proposition \ref{prop:RnIn-roots} that $J_n(x, y) = 2^{n - 1}F_n^*(x, y)$. By \mbox{\cite[Theorem 1]{bean-laugesen}},
$$
A_{F_n^*} = 4^{1 - 1/n}B\left(\frac{1}{2} - \frac{1}{n}, \frac{1}{2}\right)
$$
for every integer $n \geq 3$. Further, for any binary form $F$ of positive degree $d$, the formula $A_F = \int_{-\infty}^{+\infty}|F(x, 1)|^{-2/d}dx$ implies that $A_{cF} = |c|^{-2/d}A_F$ for any non-zero complex number $c$ (see \cite{bean94}). Since $\deg J_n = \deg F_n^* = n$,
$$
A_{J_n} = A_{2^{n - 1}F_n^*} = 2^{-2(n - 1)/n}A_{F_n^*} = B\left(\frac{1}{2} - \frac{1}{n}, \frac{1}{2}\right)
$$
for any integer $n \geq 3$.

\section{Rational Automorphism Group of $J_n(x, y)$} \label{sec:In-aut}

For a binary form $F$ with integer coefficients, degree $d \geq 3$ and non-zero discriminant, define
$$
\operatorname{Aut} |F| = \left\{A \in M_{2 \times 2}(\mathbb Q)\ \colon\ F_A = \pm F\right\}.
$$
Then $\operatorname{Aut} |F|$ is a finite subgroup of $\operatorname{GL}_2(\mathbb Q)$, so it is $\operatorname{GL}_2(\mathbb Q)$-conjugate to one of the ten groups in \mbox{Table \ref{tab:finite-subgroups}}. Furthermore, $\operatorname{Aut} F$ is a normal subgroup of $\operatorname{Aut} |F|$. Let
\begin{equation} \label{eq:D2}
D_2 = \left\langle \begin{pmatrix}-1 & 0\\0 & 1\end{pmatrix}, \begin{pmatrix}1 & 0\\0 & -1\end{pmatrix}\right\rangle,
\end{equation}
and let $\mathbf D_2$ and $\mathbf D_4$ be as in \mbox{Table \ref{tab:finite-subgroups}}. Note that the group $D_2$, while being equivalent to $\mathbf D_2$ under $\operatorname{GL}_2(\mathbb Q)$-conjugation, is not equal to $\mathbf D_2$. In this section we will prove the following lemma.

\begin{lem} \label{lem:AutIn}
Let $n \geq 3$ be an integer.
\begin{enumerate}[1.]
\item If $n$ is odd, then
$$
\operatorname{Aut} J_n = \left\langle \begin{pmatrix}-1 & 0\\0 & 1\end{pmatrix}\right\rangle, \quad \operatorname{Aut} |J_n| = D_2, \quad W_{J_n} = \frac{1}{2}.
$$

\item If $n$ is even, then
$$
\operatorname{Aut} J_n = \mathbf D_2, \quad \operatorname{Aut} |J_n| = \mathbf D_4, \quad W_{J_n} = \frac{1}{4}.
$$
\end{enumerate}
\end{lem}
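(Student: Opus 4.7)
The plan is to exploit Proposition~\ref{prop:RnIn-roots} in two complementary ways: the factorization $I_n = 2^{n-1}\prod_{k=1}^n(\sin(k\pi/n)x - \cos(k\pi/n)y)$ shows that $I_n$ vanishes on the $n$ lines $L_k$ through the origin at angles $k\pi/n$, so every $A \in \operatorname{Aut}|I_n|$ must permute $\{L_k\}$; and the identity $I_n(\cos\theta,\sin\theta) = \sin(n\theta)$ reduces deciding whether $A$ fixes or negates $I_n$ to elementary trigonometry when $A$ is orthogonal. The strategy is to determine $\operatorname{Aut}|I_n|$ first, then extract $\operatorname{Aut} I_n$ by a sign check and apply the formula (\ref{eq:WF}) for $W_{I_n}$.

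To find $\operatorname{Aut}|I_n|$ I would enumerate the rational matrices permuting $\{L_k\}$: after disposing of the exotic conjugacy classes (see below), the candidates reduce to the eight rational orthogonal matrices $\pm I$, $\pm\operatorname{diag}(-1,1)$, $\pm\operatorname{swap}$, and $\pm\operatorname{rot}(\pi/2)$. Of these, $\pm I$ and $\pm\operatorname{diag}(-1,1)$ preserve $\{L_k\}$ for every $n$ (the latter since reflection across a coordinate axis sends $L_k$ to $L_{n-k}$), while $\pm\operatorname{swap}$ and $\pm\operatorname{rot}(\pi/2)$ do so only when $\pi/2$ is a multiple of $\pi/n$, i.e.\ when $n$ is even. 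This yields $\operatorname{Aut}|I_n| = D_2$ for $n$ odd and $\operatorname{Aut}|I_n| \cong \mathbf D_4$ for $n$ even, matching the lemma.

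To identify $\operatorname{Aut} I_n$ I would check, for each $A$ above, whether $I_n \circ A = I_n$ or $-I_n$ by evaluation on the unit circle: any rational orthogonal $A$ acts as $\theta \mapsto \alpha \pm \theta$ for an explicit $\alpha$, so $I_n\circ A$ restricted to the circle becomes $\sin(n(\alpha\pm\theta))$, which equals $\pm\sin(n\theta)$ by an elementary identity. For instance $-I$ gives $(-1)^n\sin(n\theta)$ and $\operatorname{diag}(-1,1)$ gives $(-1)^{n+1}\sin(n\theta)$, and analogous computations handle $\operatorname{swap}$ and $\operatorname{rot}(\pi/2)$ when $n$ is even. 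Tallying the signs produces $|\operatorname{Aut} I_n| = 2$ for $n$ odd and $|\operatorname{Aut} I_n| = 4$ for $n$ even. In both cases the generators are integer matrices, so $\operatorname{Aut} I_n \subset \operatorname{GL}_2(\mathbb Z)$ and (\ref{eq:WF}) gives $W_{I_n} = 1/|\operatorname{Aut} I_n|$.

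The main obstacle is excluding rational matrices from the exotic conjugacy classes $\mathbf C_3$, $\mathbf D_3$, $\mathbf C_6$, $\mathbf D_6$ of Table~\ref{tab:finite-subgroups}, i.e.\ ruling out that some order-$3$ or order-$6$ rational matrix permutes $\{L_k\}$. I would dispose of this by observing that such a matrix must cyclically permute a size-$3$ orbit of lines; setting up the requirement that it send the direction vector $(\cos(k\pi/n),\sin(k\pi/n))$ of one $L_k$ into the $\mathbb Q$-span of another's direction then imposes, via $\mathbb Q$-linear independence of the relevant values of sine and cosine, conflicting rationality constraints on the matrix entries, so no such matrix exists.
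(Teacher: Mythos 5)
There is a genuine gap, and it sits exactly where the real work of the lemma is. Your reduction of the candidate automorphisms to ``the eight rational orthogonal matrices'' is unjustified: Table \ref{tab:finite-subgroups} classifies finite subgroups of $\operatorname{GL}_2(\mathbb Q)$ only up to $\operatorname{GL}_2(\mathbb Q)$-conjugacy, so an element of $\operatorname{Aut}|I_n|$ of order $4$ need not be orthogonal --- for example $\begin{pmatrix}0 & t\\ -1/t & 0\end{pmatrix}$ has order $4$ for every non-zero rational $t$ --- and the condition ``$A$ permutes the $n$ lines $L_k$'' does not force orthogonality either (it only constrains the induced map on $\mathbb P^1$). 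This matters precisely in the odd case: after establishing $D_2 \subseteq \operatorname{Aut}|I_n|$, one must rule out that $\operatorname{Aut}|I_n|$ is some \emph{non-standard} rational copy of $\mathbf D_4$ or $\mathbf D_6$ containing $D_2$, and your enumeration silently skips all of these. The paper handles this by citing a classification result (\cite[Lemma 3.4]{mosunov2}) which says that any finite rational group strictly containing $D_2$ must be one of two explicit one-parameter families $\left\langle\operatorname{diag}(-1,1), \begin{pmatrix}0 & t\\ -1/t & 0\end{pmatrix}\right\rangle$ or $\left\langle\operatorname{diag}(-1,1), \begin{pmatrix}1/2 & t/2\\ -3/(2t) & 1/2\end{pmatrix}\right\rangle$, and then eliminates each: the first by evaluating the functional equation at $(x,y) = (1,0)$, the second by observing it would force $\cot(k\pi/n) \in \mathbb Q \setminus \{0,\pm1\}$, contradicting the Niven-type fact that the only rational values of $\cot(k\pi/n)$ are $0$ and $\pm 1$. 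Your final paragraph gestures at ``conflicting rationality constraints'' but only for the order-$3$ and order-$6$ classes, and without the actual arithmetic input (Niven's theorem or an equivalent) it is a sketch rather than a proof; moreover it does not address the non-orthogonal order-$4$ candidates at all.

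The rest of your argument is sound. The observation that $\operatorname{Aut}|I_n|$ permutes the zero lines of $I_n$ is correct, the sign computations on the unit circle via $I_n(\cos\theta,\sin\theta) = \sin(n\theta)$ are a clean way to separate $\operatorname{Aut} I_n$ from $\operatorname{Aut}|I_n|$ (and agree with the paper's ``check every element'' step), and the even case closes without the missing argument: once $\mathbf D_4 \subseteq \operatorname{Aut}|I_n|$ is known, equality follows from the classification simply because no group in Table \ref{tab:finite-subgroups} has order divisible by $8$ other than $\mathbf D_4$ itself. So the even case and the computation of $W_{I_n}$ from (\ref{eq:WF}) are fine; the odd case needs the classification lemma (or an equivalent argument covering non-orthogonal rational matrices) to be complete.
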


\begin{proof}
Suppose that $n$ is even. Then it follows from Proposition \ref{prop:RnIn-coefficients} that $J_n(x, y) = xyG_n(x^2, y^2)$ for some binary form $G_n(x, y)$, with integer coefficients, which is either \emph{reciprocal} ($4 \nmid n$) or \emph{skew-reciprocal} ($4 \mid n$). Thus, $\mathbf D_4 \subseteq \operatorname{Aut} |J_n|$. Since we know what finite subgroups of $\operatorname{GL}_2(\mathbb Q)$ are (see \mbox{Table \ref{tab:finite-subgroups}}), we can immediately conclude that $\operatorname{Aut} |J_n| = \mathbf D_4$. By checking every element of $\mathbf D_4$, we can also conclude that $\operatorname{Aut} J_n = \mathbf D_2$. Since $\operatorname{Aut} J_n \subseteq \operatorname{GL}_2(\mathbb Z)$, it follows from (\ref{eq:WF}) that \mbox{$W_{J_n} = |\operatorname{Aut} J_n|^{-1} = \frac{1}{4}$}.

Suppose that $n$ is odd. Then it follows from Proposition \ref{prop:RnIn-coefficients} that $J_n(x, y) = yG_n(x^2, y^2)$ for some binary form $G_n(x, y)$ with integer coefficients. Thus, $D_2 \subseteq \operatorname{Aut} |J_n|$. We claim that, in fact, the equality holds. Assume for a contradiction that \mbox{$D_2 \subsetneq \operatorname{Aut} |J_n|$}. By \cite[Lemma 3.4]{mosunov2}, there must exist a non-zero rational number $t$ such that either
$$
\operatorname{Aut} |J_n| = \left\langle\begin{pmatrix}-1 & 0\\0 & 1\end{pmatrix}, \begin{pmatrix}0 & t\\-1/t & 0\end{pmatrix}\right\rangle
$$
or
$$
\operatorname{Aut} |J_n| = \left\langle\begin{pmatrix}-1 & 0\\0 & 1\end{pmatrix}, \begin{pmatrix}1/2 & t/2\\-3/(2t) & 1/2\end{pmatrix}\right\rangle.
$$
Let us eliminate each of these two options.

\begin{enumerate}[1.]
\item Suppose that there exists a non-zero rational number $t$ such that \mbox{$P \in \operatorname{Aut} |J_n|$}, where $P =  \begin{pmatrix}0 & t\\-1/t & 0\end{pmatrix}$. Then
$$
J_n(x, y) = \pm J_n\left(ty, -t^{-1}x\right).
$$
By plugging $x = 1$ and $y = 0$ into the above equation, we see that
$$
0 = (-1)^{\frac{n - 1}{2}}(-t^{-1})^n,
$$
which is impossible, because $t \neq 0$.

\item Suppose that there exists a non-zero rational number $t$ such that \mbox{$P \in \operatorname{Aut} |J_n|$}, where $P = \begin{pmatrix}1/2 & t/2\\-3/(2t) & 1/2\end{pmatrix}$. Then
$$
J_n(x, y) = \pm J_n\left(\frac{1}{2}x + \frac{t}{2}y,\ -\frac{3}{2t}x + \frac{1}{2}y\right).
$$
By plugging $x = 1$ and $y = 0$, we find that
$$
J_n\left(\frac{1}{2},\ -\frac{3}{2t}\right) = 0.
$$
But then it follows from Proposition \ref{prop:RnIn-coefficients} that
$$
\frac{1}{2}\sin\frac{k\pi}{n} + \frac{3}{2t}\cos\frac{k\pi}{n} = 0
$$
for some integer $k \in \{1, \ldots, n\}$. Thus, $-\frac{t}{3} = \cot\frac{k\pi}{n}$, where $k \neq n$. But the only possible rational values of $\cot\frac{k\pi}{n}$ are $0$ and $\pm 1$, and since $t \neq 0$, we find that $\cot\frac{k\pi}{n} = \pm 1$. Since $\cot(x) = \pm 1$ if and only if $x = \frac{\pi}{4} + \frac{\pi}{2}m$ for some $m \in \mathbb Z$, we conclude that $\frac{k\pi}{n} = \frac{\pi}{4} + \frac{\pi}{2}m$. But then $4k = (1 + 2m)n$, and so we reach a contradiction, because the number on the left-hand side is even, while the number on the right-hand side is odd.
\end{enumerate}

In view of the above results we can conclude that $\operatorname{Aut} |J_n| = D_2$. By checking every element of $D_2$, we deduce that $\operatorname{Aut} J_n = \left\langle \begin{pmatrix}-1 & 0\\0 & 1\end{pmatrix}\right\rangle$. Since $\operatorname{Aut} J_n \subseteq \operatorname{GL}_2(\mathbb Z)$, it follows from (\ref{eq:WF}) that \mbox{$W_{J_n} = |\operatorname{Aut} J_n|^{-1} = \frac{1}{2}$}.
\end{proof}

\section{Area of the Fundamental Region of $R_n(x, y)$} \label{sec:Rn-area}

For a positive integer $n$, let
$$
P_n = \begin{pmatrix}\cos\frac{\pi}{2n} & \sin\frac{\pi}{2n}\\ -\sin \frac{\pi}{2n} & \cos\frac{\pi}{2n}\end{pmatrix}
$$
be the matrix corresponding to the clockwise rotation in $\mathbb R^2$ by $\frac{\pi}{2n}$ radians about the origin. Notice that the determinant of $P_n$ is equal to $1$. By \mbox{Proposition \ref{prop:RnIn-roots}},
\small
\begin{align*}
(J_n)_{P_n}(x, y)
& = J_n\left(x\cos\frac{\pi}{2n} + y\sin\frac{\pi}{2n},\ -x\sin\frac{\pi}{2n} + y\cos\frac{\pi}{2n}\right)\\
& = -2^{n - 1}\prod\limits_{k = 0}^{n - 1}\left(\sin\frac{k\pi}{n}\left(x\cos\frac{\pi}{2n} + y\sin\frac{\pi}{2n}\right) - \cos\frac{k\pi}{n}\left(-x\sin\frac{\pi}{2n} + y\cos\frac{\pi}{2n}\right)\right)\\
& = -2^{n - 1}\prod\limits_{k = 0}^{n - 1}\left(x\sin\left(\frac{(2k + 1)\pi}{2n}\right) - y\cos\left(\frac{(2k + 1)\pi}{2n}\right)\right)\\
& = -R_n(x, y).
\end{align*}
\normalsize
In \cite{bean94}, Bean proved that $A_F = |\det P| A_{F_P}$ for any $P \in \operatorname{GL}_2(\mathbb R)$. Using this formula, we find that the area of the fundamental region of $R_n(x, y)$ is equal to
$$
A_{R_n} = A_{-R_n} = A_{(J_n)_{P_n}} = |\det P_n|^{-1}A_{J_n} = A_{J_n} = B\left(\frac{1}{2} - \frac{1}{n},\ \frac{1}{2}\right).
$$

\section{Rational Automorphism Group of $R_n(x, y)$}  \label{sec:Rn-aut}

Recall the definition of $D_2$ from (\ref{eq:D2}), as well as the definitions of $\mathbf D_2$ and $\mathbf D_4$ from \mbox{Table \ref{tab:finite-subgroups}}. In this section we will prove the following lemma.

\begin{lem} \label{lem:AutIn}
Let $n \geq 3$ be an integer.
\begin{enumerate}[1.]
\item If $n$ is odd, then
$$
\operatorname{Aut} R_n = \left\langle \begin{pmatrix}1 & 0\\0 & -1\end{pmatrix}\right\rangle, \quad \operatorname{Aut} |R_n| = D_2, \quad W_{R_n} = \frac{1}{2}.
$$

\item If $n$ is even and $4 \nmid n$, then
$$
\operatorname{Aut} R_n = \mathbf D_2, \quad \operatorname{Aut} |R_n| = \mathbf D_4, \quad W_{R_n} = \frac{1}{4}.
$$

\item If $4 \mid n$, then
$$
\operatorname{Aut} R_n = \operatorname{Aut} |R_n| = \mathbf D_4, \quad W_{R_n} = \frac{1}{8}.
$$
\end{enumerate}
\end{lem}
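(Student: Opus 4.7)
The plan is to parallel the proof of the corresponding lemma for $I_n$ in Section \ref{sec:In-aut}. First I would exhibit automorphisms using Proposition \ref{prop:RnIn-coefficients}: every term of $R_n$ carries an even power of $y$, so $\begin{pmatrix}1&0\\0&-1\end{pmatrix}\in\operatorname{Aut} R_n$ for all $n$. When $n$ is odd the powers of $x$ are all odd, so $\begin{pmatrix}-1&0\\0&1\end{pmatrix}$ sends $R_n$ to $-R_n$ and $D_2\subseteq\operatorname{Aut}|R_n|$. When $n$ is even those powers are even, giving $D_2\subseteq\operatorname{Aut} R_n$; a DeMoivre computation analogous to the one in Section \ref{sec:Rn-area} further gives $R_n(y,x)=(-1)^{n/2}R_n(x,y)$, which places $\begin{pmatrix}0&1\\1&0\end{pmatrix}$ in $\operatorname{Aut} R_n$ when $4\mid n$ and in $\operatorname{Aut}|R_n|\setminus\operatorname{Aut} R_n$ when $4\nmid n$. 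Together these produce $\mathbf{D}_4\subseteq\operatorname{Aut} R_n$ in the first subcase and $\mathbf{D}_4\subseteq\operatorname{Aut}|R_n|$ in the second.

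Upgrading these containments to equalities is immediate in the even case: $\mathbf{D}_4$ has order $8$, and the only larger group in Table \ref{tab:finite-subgroups} is $\mathbf{D}_6$ of order $12$, which by Lagrange cannot contain $\mathbf{D}_4$. Hence $\operatorname{Aut}|R_n|=\mathbf{D}_4$ in both even subcases. Running each of the eight matrices of $\mathbf{D}_4$ through the identities $R_n(y,x)=R_n(-y,-x)=R_n(y,-x)=R_n(-y,x)=(-1)^{n/2}R_n(x,y)$ then shows that when $4\mid n$ all eight fix $R_n$, while when $4\nmid n$ only $I,-I,\begin{pmatrix}\pm 1&0\\0&\mp 1\end{pmatrix}$ do, yielding the claimed $\operatorname{Aut} R_n$.

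The odd case is where the real work sits and is modelled on the odd case of the $I_n$ lemma. Assuming $D_2\subsetneq\operatorname{Aut}|R_n|$, Lemma 3.4 of \cite{mosunov2} supplies a matrix in $\operatorname{Aut}|R_n|$ of one of the two shapes $\begin{pmatrix}0 & t\\-1/t & 0\end{pmatrix}$ or $\begin{pmatrix}1/2 & t/2\\-3/(2t) & 1/2\end{pmatrix}$ with $t\in\mathbb{Q}^{\times}$. The key point is that $R_n(0,1)=\cos(n\pi/2)=0$ for odd $n$ — the counterpart of the rational zero $I_n(1,0)=0$ exploited in Section \ref{sec:In-aut}. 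Substituting $(x,y)=(1,0)$ for the first shape gives $1=\pm R_n(0,-1/t)=0$, a contradiction. Substituting $(x,y)=(0,1)$ for the second shape gives $R_n(t/2,1/2)=0$, and Proposition \ref{prop:RnIn-roots} then forces $t=\cot\tfrac{(2k+1)\pi}{2n}$ for some $k$. Niven's theorem (the only rational values of $\cot(r\pi)$ are $0,\pm 1$), combined with the parity of $2k+1$ and $2n$, rules out every $t\in\mathbb{Q}^{\times}$ when $n$ is odd, contradicting $t\neq 0$. Hence $\operatorname{Aut}|R_n|=D_2$, and direct inspection of its four elements gives $\operatorname{Aut} R_n=\langle\begin{pmatrix}1&0\\0&-1\end{pmatrix}\rangle$.

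In every case $\operatorname{Aut} R_n\subseteq\operatorname{GL}_2(\mathbb{Z})$, so (\ref{eq:WF}) yields $W_{R_n}=|\operatorname{Aut} R_n|^{-1}\in\{1/2,1/4,1/8\}$, matching the three stated values. The main obstacle is the odd case: the Niven-style argument only goes through once one recognises $(0,1)$, rather than $(1,0)$, as the rational zero of $R_n$ that makes the substitution productive.
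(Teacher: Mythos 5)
Your proposal is correct and follows essentially the same route as the paper: the even case via $R_n(x,y)=G_n(x^2,y^2)$ and maximality of $\mathbf D_4$ among the groups in Table \ref{tab:finite-subgroups}, and the odd case via Lemma 3.4 of \cite{mosunov2}, a rational-cotangent (Niven-type) argument, and the parity contradiction $4k+2=(1+2m)n$. The only cosmetic difference is in eliminating the first rogue matrix shape, where you substitute $(x,y)=(1,0)$ to get $1=\pm R_n(0,-1/t)=0$ while the paper substitutes $(x,y)=(0,1)$ to get $t^n=0$; both are valid.
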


\begin{proof}
Suppose that $n$ is even. Then it follows from Proposition \ref{prop:RnIn-coefficients} that $R_n(x, y) = G_n(x^2, y^2)$ for some binary form $G_n(x, y)$, with integer coefficients, which is either \emph{reciprocal} ($4 \mid n$) or \emph{skew-reciprocal} ($4 \nmid n$). Thus, $\mathbf D_4 \subseteq \operatorname{Aut}|R_n|$, and since we know what finite subgroups of $\operatorname{GL}_2(\mathbb Q)$ are (see \mbox{Table \ref{tab:finite-subgroups}}), we can immediately conclude that $\operatorname{Aut} |R_n| = \mathbf D_4$. By checking every element of $\mathbf D_4$, we can also conclude that $\operatorname{Aut} R_n = \mathbf D_4$ when $4 \mid n$ and $\operatorname{Aut} R_n = \mathbf D_2$ when $4 \nmid n$. Since $\operatorname{Aut} R_n \subseteq \operatorname{GL}_2(\mathbb Z)$, it follows from (\ref{eq:WF}) that \mbox{$W_{R_n} = \frac{1}{8}$} when $4 \mid n$ and \mbox{$W_{R_n} = \frac{1}{4}$} when $4 \nmid n$.

Suppose that $n$ is odd. Then it follows from Proposition \ref{prop:RnIn-coefficients} that $R_n(x, y) = xG_n(x^2, y^2)$ for some binary form $G_n(x, y)$ with integer coefficients. We claim that, in fact, the equality holds. Assume for a contradiction that \mbox{$D_2 \subsetneq \operatorname{Aut} |R_n|$}. By \cite[Lemma 3.4]{mosunov2}, there must exist a non-zero rational number $t$ such that either
$$
\operatorname{Aut} |R_n| = \left\langle\begin{pmatrix}-1 & 0\\0 & 1\end{pmatrix}, \begin{pmatrix}0 & t\\-1/t & 0\end{pmatrix}\right\rangle
$$
or
$$
\operatorname{Aut} |R_n| = \left\langle\begin{pmatrix}-1 & 0\\0 & 1\end{pmatrix}, \begin{pmatrix}1/2 & t/2\\-3/(2t) & 1/2\end{pmatrix}\right\rangle.
$$
Let us eliminate each of these two options.

\begin{enumerate}[1.]
\item Suppose that there exists a non-zero rational number $t$ such that \mbox{$M \in \operatorname{Aut} |R_n|$}, where $P =  \begin{pmatrix}0 & t\\-1/t & 0\end{pmatrix}$. Then
$$
R_n(x, y) = \pm R_n\left(ty, -t^{-1}x\right).
$$
By plugging $x = 0$ and $y = 1$ into the above equation, we see that $t^n = 0$, which is impossible, because $t \neq 0$.

\item Suppose that there exists a non-zero rational number $t$ such that \mbox{$M \in \operatorname{Aut} |R_n|$}, where $P = \begin{pmatrix}1/2 & t/2\\-3/(2t) & 1/2\end{pmatrix}$. Then
$$
R_n(x, y) = \pm R_n\left(\frac{1}{2}x + \frac{t}{2}y,\ -\frac{3}{2t}x + \frac{1}{2}y\right).
$$
By plugging $x = 0$ and $y = 1$, we find that
$$
R_n\left(\frac{t}{2},\ \frac{1}{2}\right) = 0.
$$
But then it follows from Proposition \ref{prop:RnIn-coefficients} that
$$
\frac{t}{2}\sin\left(\frac{(2k + 1)\pi}{2n}\right) - \frac{1}{2}\cos\left(\frac{(2k + 1)\pi}{2n}\right) = 0
$$
for some integer $k \in \{0, \ldots, n - 1\}$. Thus, $t = \cot\left(\frac{(2k + 1)\pi}{2n}\right)$. But the only possible rational values of $\cot\left(\frac{(2k + 1)\pi}{2n}\right)$ are $0$ and $\pm 1$, and since $t \neq 0$, we find that $\cot\left(\frac{(2k + 1)\pi}{2n}\right) = \pm 1$. Since $\cot(x) = \pm 1$ if and only if $x = \frac{\pi}{4} + \frac{\pi}{2}m$ for some $m \in \mathbb Z$, we conclude that $\frac{(2k + 1)\pi}{2n} = \frac{\pi}{4} + \frac{\pi}{2}m$. But then $4k + 2 = (1 + 2m)n$, and so we reach a contradiction, because the number on the left-hand side is even, while the number on the right-hand side is odd.
\end{enumerate}

In view of the above results we can conclude that $\operatorname{Aut} |R_n| = D_2$. By checking every element of $D_2$, we deduce that $\operatorname{Aut} R_n = \left\langle \begin{pmatrix}1 & 0\\0 & -1\end{pmatrix}\right\rangle$. Since $\operatorname{Aut} R_n \subseteq \operatorname{GL}_2(\mathbb Z)$, it follows from (\ref{eq:WF}) that \mbox{$W_{R_n} = |\operatorname{Aut} R_n|^{-1} = \frac{1}{2}$}.
\end{proof}

\section*{Acknowledgements}

The author is grateful to the anonymous reviewer for their advice on how to improve the article.

\end{document}